\newcommand{\assign}{:=}
\newcommand{\tmem}[1]{{\em #1\/}}
\newcommand{\tmop}[1]{\ensuremath{\operatorname{#1}}}
\newcommand{\tmtextbf}[1]{{\bfseries{#1}}}
\newcommand{\tmtextit}[1]{{\itshape{#1}}}
\newcommand{\tmtextsc}[1]{{\scshape{#1}}}
\newenvironment{proof}{\noindent\textbf{Proof\ }}{\hspace*{\fill}$\Box$\medskip}
{\theorembodyfont{\rmfamily}\newtheorem{example}{Example}}
\newtheorem{corollary}{Corollary}
\newtheorem{lemma}{Lemma}
\newtheorem{proposition}{Proposition}
{\theorembodyfont{\rmfamily}\newtheorem{remark}{Remark}}
\newtheorem{theorem}{Theorem}
\begin{document}

\title{Energy decay for solutions of the wave equation with general memory
boundary conditions.}

\author{Pierre Cornilleau\\
Ecole Centrale de Lyon\\
Institut Camille Jordan, UMR CNRS 5208\\
36 avenue Guy de Collongue\\
69134 Ecully Cedex France\\
pcornill@ec-lyon.fr
\\
\\
Serge Nicaise\\
Universit\'e de Valenciennes et du Hainaut Cambr\'esis\\
LAMAV,  FR CNRS 2956, \\
Institut des Sciences et Techniques de Valenciennes\\
F-59313 - Valenciennes Cedex 9 France\\
Serge.Nicaise@univ-valenciennes.fr\\
}

\maketitle

\begin{abstract}
We consider the wave equation in a smooth domain subject to
Dirichlet boundary conditions on one part of the boundary and
dissipative boundary conditions of memory-delay type on the
remainder part of the boundary, where a general borelian measure is
involved. Under quite weak assumptions on this measure,  using
  the multiplier method and a standard integral inequality we show
    the exponential stability of the system.
    Some examples of measures   satisfying
our hypotheses are given,   recovering  and extending some of the
results from the literature.
\end{abstract}

\section*{Introduction}

We consider the wave equation subject to Dirichlet boundary
conditions on one part of the boundary and dissipative boundary
conditions of memory-delay type on the remainder part of the
boundary. More precisely, let $\Omega$ be a bounded open connected
set of $\mathbb{R}^n (n \geq 2)$ such that, in the sense \ of
Ne\v{c}as (\cite{Ne}), its boundary $\partial \Omega$ is of class
$\mathcal{C}^2$. Throughout the paper, $I$   denotes the $n \times
n$ identity matrix, while $A^s$   denotes the symmetric part of a
matrix $A$. Let $m$ be a $\mathcal{C}^1$ vector field on
$\bar{\Omega}$ such that
\begin{equation}
  \label{m} \inf_{\bar{\Omega}} \tmop{div} (m) > \sup_{\bar{\Omega}}
  (\tmop{div} (m) - 2 \lambda_m)
\end{equation}
where $\lambda_m (x)$ is the smallest eigenvalue function of the
real symmetric matrix $\nabla m (x)^s$.

\begin{remark}
  The set of all $\mathcal{C}^1$ vector fields on $\bar{\Omega}$ such that
(\ref{m})  holds is an open cone. If $m$ is in this set, we denote
  \[ c (m) = \frac{1}{2} \left( \inf_{\bar{\Omega}} \tmop{div} (m) -
     \sup_{\bar{\Omega}} (\tmop{div} (m) - 2 \lambda_m) \right) . \]
\end{remark}

\begin{example}
  \begin{itemize}
    \item An affine example is given by
    \[ m (x) = (A_1 + A_2) (x - x_0), \]
    where $A_1$ is a definite positive matrix, $A_2$ a skew-symmetric matrix
    and $x_0$ any point in $\mathbb{R}^n$.

    \item A non linear example is
    \[ m (x) = (dI + A) (x - x_0) + F (x) \]
    where $d > 0$, $A$ is a skew-symmetric matrix, $x_0$ any point in
    $\mathbb{R}^n$ and $F$ is a $\mathcal{C}^1$ vector field on $\bar{\Omega}$
    such that
    \[ \sup_{x \in \bar{\Omega}} \|(\nabla F (x))^s \|< \frac{d}{n} \]
    ( $\| \cdot \|$ stands for the usual $2$-norm of matrices).
  \end{itemize}
\end{example}

We define a partition of $\partial \Omega$ in the following way.
Denoting by $\nu (x)$ the normal unit vector pointing outward of
$\Omega$ at a point \ $x \in \partial \Omega$ , we consider a
partition $(\partial \Omega_N, \partial \Omega_D)$ of the boundary
such that the measure of $\partial\Omega_D$ is positive and that
\begin{equation}\label{sign}
 \partial \Omega_N \subset \{x \in \partial \Omega, \text{} m (x) \cdot \nu (x)
   \geqslant 0\}, \partial \Omega_D \subset \{x \in \partial \Omega,  m
   (x) \cdot \nu (x) \leqslant 0\}.
\end{equation}

Furthermore, we assume
\begin{equation}\label{disjoint}
 \overline{\partial \Omega_D} \cap \overline{\partial \Omega_N} =
   \varnothing
   \text{ or } \ m\cdot n \leq 0 \text{ on } \overline{\partial \Omega_D} \cap \overline{\partial \Omega_N}
\end{equation}
where $n$ stands for the normal unit vector pointing outward of $\partial \Omega_N $ when considering $\partial \Omega_N $ as a sub-manifold of $\partial \Omega $.

On this domain, we consider the following delayed wave problem:
\[ (S) \left\{ \begin{array}{l}
     u'' - \Delta u = 0\\
     u = 0\\
     \partial_{\nu} u + m\cdot \nu \left( \mu_0 u' (t) + \int_0^t u' (t - s) d \mu
     (s) \right) = 0\\
     u (0) = u_0\\
     u' (0) = u_1
   \end{array} \right. \left. \begin{array}{l}
     \text{in } \mathbb{R}_+^{\ast} \times \Omega \hspace{0.25em},\\
     \text{on } \mathbb{R}_+^{\ast} \times \partial \Omega_D
     \hspace{0.25em},\\
     \text{on } \mathbb{R}_+^{\ast} \times \partial \Omega_N
     \hspace{0.25em},\\
     \text{in } \Omega \hspace{0.25em},\\
     \text{in } \Omega \hspace{0.25em},
   \end{array} \right. \]
where $u'$ (resp. $u^{^{\prime \prime}}$) is the first (resp.
second) time-derivative of $u$, $\partial_{\nu} u = \nabla u\cdot
\nu$ is the normal outward derivative of $u$ on $\partial \Omega$.
Moreover $\mu_0$ is some positive constant and $\mu$ is a borelian
measure on $\mathbb{R}^+$.

The above problem covers the case of a problem with memory type as
studied for instance in \cite{ACS,CG,guesmia:99,NPaa}, when the
measure $\mu$ is given by
\begin{equation}\label{noyauconv}
d\mu(s)= k(s) ds, \end{equation} where $ds$ stands for the Lebesgue
measure and $k$ is non negative kernel. But it also covers the case
of a problem with a delay  as studied for instance in
\cite{NP,NP2,NicValein}, when the measure $\mu$ is given by
\begin{equation}\label{diracdelay}
 \mu= \mu_1 \delta_{\tau},\end{equation}
 where $\mu_1$ is a non negative constant and $\tau>0$ represents
 the delay.
 An intermediate case treated in \cite{NP2} is the case when
\begin{equation}\label{intermediaire}
d\mu(s)= k(s) \chi_{[\tau_1,\tau_2]}(s) ds, \end{equation} where
$0\le \tau_1 <\tau_2,$   $\chi_{[\tau_1,\tau_2]}$ is the
characteristic equation of the interval $[\tau_1,\tau_2]$  and $k$
is a non negative function in $L^\infty([\tau_1,\tau_2])$.

 A closer look at the decay results obtained in these references
shows that there are different ways to quantify the energy of $(S)$.
More precisely for the measure of the form (\ref{noyauconv}),
 the exponential or polynomial decay of an appropriated energy is proved in \cite{ACS,CG,guesmia:99,NPaa},
 by combining the multiplier method (or differential geometry arguments) with the use of suitable
 Lyapounov functionals (or integral inequalities) under the
 assumptions that the kernel $k$ is sufficiently smooth and has a
 certain decay at infinity.
 On the other hand for a measure of delay type like (\ref{diracdelay})
 or (\ref{intermediaire}), the exponential stability of the system
 was proved in \cite{NP,NP2,NicValein} by proving an observability
 estimate obtained by assuming that the term
 $\int_0^t u' (t - s) d \mu(s)$ is sufficiently small with respect to $\mu_0 u'
 (t)$.
Consequently, our goal is here to obtain some uniform decay results
 in the general context described above with a similar assumptions than in \cite{NP,NP2,NicValein}.
 More precisely, we will show in this paper that if there
exists $\alpha>0$ such that
\begin{equation}\label{mu0}
 \mu_{\text{tot}}\assign  \int_0^{+\infty} e^{\alpha s} d|\mu|(s)< \mu_0
\end{equation}
where $|\mu|$ is the absolute value of the measure $\mu$, then the
above problem $(S)$ is exponentially stable.

The paper is organized as follows: in the first two sections, we
explain how to define an energy using some basic measure theory.
Using well-known results, we obtain the existence of energy
solutions. In this setting we present and prove our stabilization
result in the third section. Examples of measures $\mu$ satisfying
our hypotheses are given in the end of the paper, where we show that
we recover and extend some of the results from the references cited
above.

Finally in the whole paper we use the notation  $A\lesssim B$ for
the estimate $A\leq C B$   with some constant $C$ that only depends
on $\Omega$, $m$ or $\mu$.

\section{First results}

In this section we show that the assumption (\ref{mu0}) implies the
existence of some borelian finite measure $\lambda$ such that
\begin{equation}\label{surmesure}
\lambda(\mathbb{R}^+)<\mu_0,\quad  |\mu|\leq \lambda
\end{equation}
(in the sense that, for every measurable set $\mathcal{B}$,
$|\mu|(\mathcal{B}) \leq \lambda(\mathcal{B})$) and
\begin{equation}\label{integration}
\text{for all   measurable set }  \mathcal{B}, \int_\mathcal{B}
\lambda([s,+\infty))ds \leq \alpha^{-1} \lambda(\mathcal{B})
\end{equation}

Indeed  we  show the following equivalence:
\begin{proposition}\label{measure}
Let $\mu$ be a borelian positive measure on $\mathbb{R}^{+}$ and $\mu_0$ some positive constant. The following properties are equivalent:
\begin{itemize}
\item $\exists \alpha>0$ such that
\[\int_0^{+\infty} e^{\alpha s}d\mu(s)<\mu_0.\]
\item There exists a borelian measure $\lambda$ on $\mathbb{R}^{+}$ such that
\[\lambda(\mathbb{R}^{+})<\mu_0,\quad \mu \leq \lambda  \]
and, for some constant $\beta>0$,
\[\text{for all   measurable set }  \mathcal{B}, \int_\mathcal{B} \lambda([s,+\infty))ds \leq \beta^{-1} \lambda(\mathcal{B}).\]
\end{itemize}
\end{proposition}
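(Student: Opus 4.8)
The plan is to prove the two implications separately: the measure $\lambda$ demanded by the second property will be produced explicitly in the forward direction, while the backward direction will rest on a Gronwall-type argument applied to the tail function of $\lambda$.

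For $(1)\Rightarrow(2)$, set $\Phi(s)=\int_{[s,+\infty)}e^{\alpha t}\,d\mu(t)$, which is finite by hypothesis, non-increasing, and satisfies $\Phi(0)<\mu_0$. I would define $\lambda=\mu+\sigma$, where $\sigma$ is the absolutely continuous measure with density $\rho(s)=\alpha e^{-\alpha s}\Phi(s)\ge 0$, and take $\beta=\alpha$. The point of this choice is the identity $\lambda([s,+\infty))=\int_{[s,+\infty)}e^{\alpha(t-s)}\,d\mu(t)=\alpha^{-1}\rho(s)$, obtained by computing $\sigma([s,+\infty))=\alpha\int_s^{+\infty}e^{-\alpha u}\Phi(u)\,du$ and swapping the order of integration (Fubini) to get $\sigma([s,+\infty))=e^{-\alpha s}\Phi(s)-\mu([s,+\infty))$. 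The same Fubini computation on $[0,+\infty)$ yields $\sigma(\mathbb{R}^+)=\Phi(0)-\mu(\mathbb{R}^+)$, hence $\lambda(\mathbb{R}^+)=\Phi(0)<\mu_0$; and $\lambda\ge\mu$ is clear since $\sigma\ge 0$. Finally, the tail identity gives, for any measurable $\mathcal{B}$,
\[
\int_{\mathcal{B}}\lambda([s,+\infty))\,ds=\alpha^{-1}\int_{\mathcal{B}}\rho(s)\,ds=\alpha^{-1}\sigma(\mathcal{B})\le\alpha^{-1}\lambda(\mathcal{B}),
\]
which is exactly the required inequality with $\beta=\alpha$.

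For $(2)\Rightarrow(1)$, since $\mu\le\lambda$ it suffices to find $\alpha>0$ with $\int_0^{+\infty}e^{\alpha s}\,d\lambda(s)<\mu_0$, because then $\int e^{\alpha s}\,d\mu\le\int e^{\alpha s}\,d\lambda<\mu_0$. Writing $g(s)=\lambda([s,+\infty))$ and using Fubini, $\int_0^{+\infty}e^{\alpha s}\,d\lambda(s)=\lambda(\mathbb{R}^+)+\alpha\int_0^{+\infty}e^{\alpha s}g(s)\,ds$, so everything reduces to an exponential tail bound for $g$. Specializing the hypothesis to $\mathcal{B}=[s,+\infty)$ gives $G(s):=\int_s^{+\infty}g(u)\,du\le\beta^{-1}g(s)$; since $G$ is absolutely continuous with $G'=-g$ a.e., this reads $G'\le-\beta G$, so $e^{\beta s}G(s)$ is non-increasing and $G(s)\le G(0)e^{-\beta s}$, with $G(0)\le\beta^{-1}\lambda(\mathbb{R}^+)<\infty$ (take $\mathcal{B}=\mathbb{R}^+$). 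Monotonicity of $g$ then gives $g(s)\le\int_{s-1}^{s}g(u)\,du\le G(s-1)\le G(0)e^{\beta}e^{-\beta s}$ for $s\ge 1$, i.e. $g$ decays exponentially.

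It remains to let $\alpha\to 0^+$: for $0<\alpha\le\beta/2$ the integral $\int_0^{+\infty}e^{\alpha s}g(s)\,ds$ is bounded by a finite constant $M$ independent of $\alpha$ (using the tail bound), so $\int e^{\alpha s}\,d\lambda\le\lambda(\mathbb{R}^+)+\alpha M$. Since $\lambda(\mathbb{R}^+)<\mu_0$, choosing $\alpha$ small enough forces $\int e^{\alpha s}\,d\lambda<\mu_0$, completing this direction. I expect the main obstacle to be the forward construction: when $\mu$ is singular (as in the delay case $\mu=\mu_1\delta_\tau$) no absolutely continuous $\lambda$ can dominate it, so $\lambda$ must retain the singular part of $\mu$ while acquiring an absolutely continuous part large enough to control its own tail — the density $\rho(s)=\alpha e^{-\alpha s}\Phi(s)$ is engineered precisely so that the tail identity $\lambda([s,+\infty))=\alpha^{-1}\rho(s)$ holds with equality. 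The secondary technical point is justifying the differential inequality for $G$ at the level of almost-everywhere derivatives and exploiting the monotonicity of $g$ to pass from the integrated tail bound to a pointwise one.
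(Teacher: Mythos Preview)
Your proof is correct and takes a genuinely different route from the paper's. The paper introduces the operator $T(\nu)(\mathcal{B})=\int_{\mathcal{B}}\nu([s,+\infty))\,ds$ and, for $(1)\Rightarrow(2)$, defines $\lambda=\sum_{n\ge 0}\alpha^n T^n(\mu)$; the inequality $T(\lambda)\le\alpha^{-1}\lambda$ is then immediate from shifting the index, and $\lambda(\mathbb{R}^+)=\int e^{\alpha s}\,d\mu$ follows from the identity $T^n(\mu)(\mathbb{R}^+)=\int s^n/n!\,d\mu$ obtained by Fubini. Your closed-form $\lambda=\mu+\sigma$ with density $\rho(s)=\alpha e^{-\alpha s}\Phi(s)$ is in fact the \emph{same} measure (both satisfy the tail identity $\lambda([s,+\infty))=e^{-\alpha s}\Phi(s)$), but your construction bypasses the series and makes the tail identity the defining property, which is arguably cleaner. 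For $(2)\Rightarrow(1)$, the paper iterates $\beta^n T^n(\mu)\le\lambda$, sums against $r^n$, and recognises $\sum (r\beta)^n T^n(\mu)(\mathbb{R}^+)=\int e^{r\beta s}\,d\mu$ as a Taylor series, obtaining $\int e^{r\beta s}\,d\mu\le(1-r)^{-1}\lambda(\mathbb{R}^+)<\mu_0$ for small $r$. Your Gronwall argument on $G(s)=\int_s^\infty g$ achieves the same exponential tail decay by a differential inequality rather than iteration. The paper's approach has the elegance of using one operator $T$ symmetrically in both directions; your approach is more hands-on and yields the forward-direction $\lambda$ explicitly without summing a series.
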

\begin{proof}
We introduce the application $T$ from the set of positive
 borelian measures into itself as follows: if $\mu$ is some positive
 borelian measure, we define a positive borelian measure
$T(\mu)$ by
$$T(\mu)(\mathcal{B})=\int_{\mathcal{B}}\mu([s,+\infty))ds,$$
if $\mathcal{B}$ is any measurable set.
\par $(\Leftarrow)$ If $\lambda$ fulfills the second property, then it immediately follows that
\[\forall n\in \mathbb{N}, \beta^n T^n(\mu)\leq \lambda,\]
where as usual $T^n$ is the composition $T\circ T \cdots \circ T$
$n$-times.
 A summation consequently gives, for any $r\in (0,1)$,
\[\sum_{n=0}^{\infty} (r \beta)^n T^n(\mu)\leq \sum_{n=0}^{\infty}  r^n \lambda=(1-r)^{-1} \lambda.\]
Using Fubini theorem, we can now compute
\begin{eqnarray*}
T^n(\mu)(\mathbb{R}^+)&=&\int_0^{+\infty}\left( \int_{s_{n+1}}^{+\infty}\cdots \int_{s_3}^{+\infty}\left( \int_{s_2}^{+\infty} d\mu(s_1)\right)ds_2\cdots ds_{n}\right) ds_{n+1} \\
&=& \int_0^{+\infty}\left( \int_0^{s_1}\cdots \left( \int_0^{s_n} ds_{n+1}\right)\cdots ds_2\right) d\mu(s_1)\\
&=& \int_0^{+\infty}\frac{s_1^n}{n!}d\mu(s_1)
\end{eqnarray*}
so that, using monotone convergence theorem, one can obtain
\[ \int_0^{+\infty} e^{r \beta s} d\mu(s)\leq (1-r)^{-1}\lambda(\mathbb{R}^{+})\]
and our proof ends using that $(1-r)^{-1}\lambda(\mathbb{R}^{+})<\mu_0$ for sufficiently small $r$.
\par $(\Rightarrow)$ For any measurable set $\mathcal{B}$, we define
\[\lambda(\mathcal{B})=\sum_{n=0}^{\infty} \alpha^n T^n(\mu)(\mathcal{B}).\]
It is clear that $\lambda$ is a borelian measure such that $\mu \leq \lambda$. Moreover, if $\mathcal{B}$ is a measurable set, one has, thanks to monotone convergence theorem
\begin{eqnarray*}
T(\lambda)(\mathcal{B})&=&\int_\mathcal{B} \lambda([s,+\infty))ds\\
&=& \sum_{n=0}^{\infty}\alpha^n \int_\mathcal{B} T^n(\mu)([s,+\infty))ds\\
&\leq& \alpha^{-1} \sum_{n=0}^{\infty}\alpha^{n+1} T^{n+1}(\mu)(\mathcal{B}),
\end{eqnarray*}
that is, $T(\lambda)\leq \alpha^{-1} \lambda$.
\par \noindent Finally, another use of monotone convergence theorem
gives
\[\lambda(\mathbb{R}^{+}) = \int_0^{+\infty} e^{\alpha s} d\mu(s)<\mu_0.\]
\end{proof}

\begin{remark}\label{lrk1}
\begin{itemize}
\item If $\mu$ satisfies our first property, one can choose $\beta=\alpha$ in our second property.
\item If $\mu$ is supported in $(0,\tau]$, it is straightforward to see that, for some small enough constant $c$,
$$d\lambda(s)=d\mu(s)+c \ \chi_{[0,\tau]}(s) d s$$
 fulfills \eqref{mu0}. This observation allows us to recover the choices of energy in \cite{NP,NP2}.
\end{itemize}
\end{remark}

\noindent In the sequel, we can thus  consider the measure $\lambda$
obtained by the application of Proposition \ref{measure} to $|\mu|$.

\section{Well-posedness}

\subsection{General results}

Defining
\begin{equation*}
H^1_D(\Omega):= \{u\in H^1(\Omega); u=0 \text{ on } \partial \Omega_D\} \text{ and } H^1_0(\Omega):= \{u\in H^1(\Omega); u=0 \text{ on } \partial \Omega \},
\end{equation*}
we here present an application of Theorem 4.4 of Propst
and Pr\"{u}ss paper (see \cite{PP}) in the framework of hypothesis \eqref{disjoint}.

\begin{theorem}\label{Laplace}
  Suppose $u_0 \in H_D^1 (\Omega), u_1 \in L^2 (\Omega)$. Then $(S)$ admits a
  unique solution $u \in \mathcal{C} ( \mathbb{R}^+, H^1 (\Omega)) \cap
  \mathcal{C}^1 (\mathbb{R}^+, L^2 (\Omega))$ in the weak sense of Propst and Pr\"{u}ss.
  Moreover, if $u_0 \in H^2 (\Omega) \cap H^1_D (\Omega)$, $u_1 \in H^1_0
  (\Omega)$, then $u \in \mathcal{C}^1 (\mathbb{R}^+, H^1
  (\Omega)) \cap \mathcal{C}^2 (\mathbb{R}^+, L^2 (\Omega))$ and the additional results hold
  $$\forall t\geq 0,\ \Delta u(t)\in L^2(\Omega) \quad \partial_{\nu} u(t)_{|\partial \Omega_N } \in H^{1/2}(\partial \Omega_N).$$
\end{theorem}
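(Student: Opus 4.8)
The plan is to recast $(S)$ as an abstract Volterra evolution problem and invoke Theorem 4.4 of Propst--Pr\"uss \cite{PP}, which handles exactly boundary-feedback systems of the form $u''-\Delta u=0$ with a memory/delay kernel acting through the Neumann trace. First I would set up the functional framework: take $H=L^2(\Omega)$ as the state space for $u'$, and let $V=H^1_D(\Omega)$ be the energy space, with the Dirichlet condition on $\partial\Omega_D$ built into $V$ and the boundary condition on $\partial\Omega_N$ appearing as a natural (weak) condition. The boundary term $\partial_\nu u + m\cdot\nu\bigl(\mu_0 u'(t)+\int_0^t u'(t-s)\,d\mu(s)\bigr)=0$ is read as a convolution feedback law; writing $a(t)=\mu_0\delta_0 + \mu$ as the total boundary kernel (a finite positive-type Borel measure thanks to $\mu_0>0$ and \eqref{mu0}), the feedback is $-m\cdot\nu\,(a*u')$. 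The assumption $\mu_{\mathrm{tot}}<\mu_0$ guarantees that $a$ is a kernel of the admissible class required by \cite{PP} (dominant instantaneous damping), so that the abstract hypotheses on the memory operator are met.

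The main structural hypothesis to verify is the sign/compatibility condition on the coefficient $m\cdot\nu$ in front of the feedback. By \eqref{sign} we have $m\cdot\nu\geq 0$ on $\partial\Omega_N$, so the damping coefficient is nonnegative, which is what makes the feedback genuinely dissipative and fits the monotonicity framework of the cited theorem. The role of \eqref{disjoint} is to ensure that the Dirichlet and Neumann parts meet compatibly: either they are separated, or $m\cdot\nu\leq 0$ on the junction, so that the coefficient $m\cdot\nu$ vanishes (by \eqref{sign}, since on $\partial\Omega_N$ it is $\geq 0$) precisely where the two conditions would otherwise conflict. This is what lets the mixed boundary problem be treated by the $H^2$-regularity theory and gives a well-defined trace setting. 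I would check that with these conditions the operator generating the dynamics is maximal monotone (or that the sesquilinear form is coercive on $V$), which yields the first assertion: for $(u_0,u_1)\in H^1_D(\Omega)\times L^2(\Omega)$ there is a unique weak solution with the stated continuity $u\in\mathcal{C}(\mathbb{R}^+,H^1(\Omega))\cap\mathcal{C}^1(\mathbb{R}^+,L^2(\Omega))$.

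For the regularity statement I would propagate smoothness from the data. Taking $(u_0,u_1)\in (H^2(\Omega)\cap H^1_D(\Omega))\times H^1_0(\Omega)$, these are ``strong'' data for the abstract problem: $u_1\in V$ and $u_0$ lies in the domain of the generator (the extra condition $u_1\in H^1_0(\Omega)$, vanishing on all of $\partial\Omega$, enforces the compatibility of the initial velocity with both boundary conditions so that the Neumann feedback is well-defined at $t=0$). The abstract theory then upgrades the solution by one derivative in time, giving $u\in\mathcal{C}^1(\mathbb{R}^+,H^1(\Omega))\cap\mathcal{C}^2(\mathbb{R}^+,L^2(\Omega))$. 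From the equation $u''=\Delta u$ read in $L^2(\Omega)$ one gets $\Delta u(t)\in L^2(\Omega)$ for every $t$, and then elliptic regularity for the mixed Dirichlet--Neumann problem on the $\mathcal{C}^2$ domain $\Omega$ yields the trace regularity $\partial_\nu u(t)_{|\partial\Omega_N}\in H^{1/2}(\partial\Omega_N)$.

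The hard part will be verifying that the problem genuinely falls into the scope of \cite[Theorem 4.4]{PP} in the present mixed-boundary, variable-coefficient setting: namely checking the admissibility of the convolution kernel $a=\mu_0\delta_0+\mu$ (its positive-type/regularity properties under only \eqref{mu0}) and, above all, establishing the elliptic regularity for the mixed boundary value problem near the interface $\overline{\partial\Omega_D}\cap\overline{\partial\Omega_N}$, where the interaction of Dirichlet and Neumann conditions can destroy $H^2$-regularity. This is exactly where \eqref{disjoint} intervenes, and I expect the bulk of the argument to consist in showing that under \eqref{disjoint} the singularities at the junction are suppressed (the feedback coefficient $m\cdot\nu$ vanishing there), so that the standard $H^2$ and $H^{1/2}$-trace estimates survive.
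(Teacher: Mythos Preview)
Your plan to invoke Theorem~4.4 of Propst--Pr\"uss is exactly what the paper does, and the paper's proof is essentially a one-line citation of that result. But your treatment of the regularity upgrade contains a real gap.

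First, you misread condition~\eqref{disjoint}: the vector $n$ there is \emph{not} the outward normal $\nu$ to $\Omega$, but the unit normal to $\partial\Omega_N$ regarded as a submanifold of $\partial\Omega$, hence a vector tangent to $\partial\Omega$. Your deduction that $m\cdot\nu$ vanishes on the junction conflates $n$ with $\nu$ and is unjustified.

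Second, and more importantly, you intend to obtain $\partial_\nu u(t)_{|\partial\Omega_N}\in H^{1/2}(\partial\Omega_N)$ via elliptic $H^2$-regularity for the mixed problem, and you anticipate that ``the bulk of the argument'' will consist in showing that singularities at the interface are suppressed so that the $H^2$ estimate survives. The paper takes precisely the opposite stance: it states explicitly that in the general setting of~\eqref{disjoint} one \emph{cannot} appeal to elliptic regularity to get more than what is claimed. Note that the theorem does \emph{not} assert $u(t)\in H^2(\Omega)$, and under the second alternative of~\eqref{disjoint} (nonempty interface) this is in general false. The claimed trace regularity is instead read off directly from the boundary condition: once $u\in\mathcal{C}^1(\mathbb{R}^+,H^1(\Omega))$, the trace $u'(t)_{|\partial\Omega_N}$ and the convolution $\int_0^t u'(t-s)\,d\mu(s)$ lie in $H^{1/2}(\partial\Omega_N)$, and multiplication by the $\mathcal{C}^1$ coefficient $m\cdot\nu$ preserves this space, so
\[
\partial_\nu u(t) = -\,m\cdot\nu\Bigl(\mu_0\,u'(t)+\int_0^t u'(t-s)\,d\mu(s)\Bigr)\in H^{1/2}(\partial\Omega_N).
\]
Together with $\Delta u(t)=u''(t)\in L^2(\Omega)$, which you already noted, this gives the two ``additional results'' without any elliptic theory near the interface. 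The geometric assumption~\eqref{disjoint} plays no role in the well-posedness argument beyond limiting how much regularity one may claim; its actual use in the paper is later, in the Rellich inequality (Proposition~\ref{Rellich}).
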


\begin{proof}
The proof is the one proposed in \cite{PP}, Theorem 4.4 except that, for smoother data, we can not use elliptic result in the general context of \eqref{disjoint} to get more regularity.
\end{proof}

\par \noindent Inspired by \cite{NP,NP2}, we now  define the energy of the
solution of (S) at any positive time $t$ by the following formula:
\begin{eqnarray*}
  E (t) & = & \frac{1}{2} \int_{\Omega} (u' (t, x))^2 + | \nabla u (t, x) |^2
  dx + \frac{1}{2} \int_{\partial \Omega_N} m\cdot \nu \int_0^t \left( \int_0^s
  (u' (t - r, x))^2 dr \right) d \lambda (s) d \sigma\\
  & + & \frac{1}{2} \int_{\partial \Omega_N} m\cdot \nu \int_t^{\infty} \left(
  \int_0^s (u'(s - r, x))^2 dr \right) d \lambda (s) d \sigma .
\end{eqnarray*}

\begin{remark}
\begin{itemize}
 \item In the definition of energy, the measure $\lambda$ can be replaced by any positive borelian measure $\nu$
 such that $$ \nu\leq \lambda $$ such as, for instance, $|\mu|$. In fact, we will  see later that
 conditions \eqref{surmesure} and \eqref{integration} are only here to ensure that the corresponding energy $E_{\lambda}$ is non increasing, but the decay of another energy $E_{\nu}$ is implied by the decay of $E_{\lambda}$.
\item If $\mu$ is compactly supported in $[0,\tau]$, for times greater
than $\tau$, one can recover the energies from {\cite{NP,NP2}} by
choosing the   measure $\lambda$ supported in $[0,\tau]$ given by
Remark \ref{lrk1}. Indeed, the last term in the energy is null for
$t > \tau$, and the second term is reduced to
  \[ \frac{1}{2} \int_{\partial \Omega_N} m\cdot \nu \int_0^{\tau} \left( \int_0^s
     ( u'(t - r, x))^2 dr \right) d \lambda (s) d \sigma . \]
\end{itemize}
\end{remark}

We now identify our energy space.

\begin{proposition}
If $u_0 \in H^2 (\Omega) \cap H^1_D (\Omega)$, $u_1 \in H^1_{0}
  (\Omega)$, then $u'\in L^{\infty}(\mathbb{R}^+,H^1(\Omega))$.
Consequently, for such initial conditions, the energy $E(t)$ is well
defined for any $t>0$ and it uniformly  depends continuously on the
initial data.
\end{proposition}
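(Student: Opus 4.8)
The plan is to establish two claims: first, that $u' \in L^\infty(\mathbb{R}^+, H^1(\Omega))$, and second, that the energy $E(t)$ is well-defined and depends continuously on the initial data. For the first claim, I would start from the higher regularity statement in Theorem~\ref{Laplace}: for smooth data $u_0 \in H^2(\Omega) \cap H^1_D(\Omega)$, $u_1 \in H^1_0(\Omega)$, we already know $u \in \mathcal{C}^1(\mathbb{R}^+, H^1(\Omega)) \cap \mathcal{C}^2(\mathbb{R}^+, L^2(\Omega))$, which gives $u' \in \mathcal{C}(\mathbb{R}^+, H^1(\Omega))$ locally, together with $\Delta u(t) \in L^2(\Omega)$ and $\partial_\nu u(t)_{|\partial\Omega_N} \in H^{1/2}(\partial\Omega_N)$ for every $t$. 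The point is to upgrade the pointwise-in-time regularity to a \emph{uniform} bound in time. The natural route is to differentiate the system $(S)$ formally in $t$: setting $v = u'$, one checks that $v$ solves the same type of wave problem with the memory boundary condition, with initial data $(u_1, \Delta u_0)$ and an extra forcing term on $\partial\Omega_N$ coming from the $\mu_0$-term and the memory kernel at $s=0$. Applying the energy estimate for this differentiated system, and controlling the memory contribution using \eqref{mu0} (so that the boundary term is dissipative rather than amplifying), yields a uniform bound on $\|v(t)\|_{L^2}^2 + \|\nabla v(t)\|_{L^2}^2 = \|u'(t)\|_{H^1}^2$ in terms of the data norms.

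Alternatively, and perhaps more robustly, I would bound $\|\nabla u'(t)\|_{L^2}$ via the natural energy of the once-differentiated problem and then invoke the monotonicity structure. The key inequality I expect to need is that the energy $E(t)$ of the differentiated problem is non-increasing, which is exactly the role that conditions \eqref{surmesure} and \eqref{integration} play (as the subsequent remark in the paper already anticipates). Concretely, once one knows the ``first-level'' energy (involving $u''$ and $\nabla u'$) is bounded above by its initial value, and that the initial value is controlled by $\|u_1\|_{H^1}^2 + \|\Delta u_0\|_{L^2}^2 \lesssim \|u_1\|_{H^1}^2 + \|u_0\|_{H^2}^2$, the uniform $L^\infty(\mathbb{R}^+, H^1)$ bound on $u'$ follows, since $\|u'(t)\|_{L^2}^2 + \|\nabla u'(t)\|_{L^2}^2$ is dominated by this energy.

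For the second claim, the well-definedness of $E(t)$ is essentially a bookkeeping verification: the volume term $\tfrac12\int_\Omega (u')^2 + |\nabla u|^2\,dx$ is finite and uniformly bounded because $u' \in L^\infty(\mathbb{R}^+, H^1)$ (hence in particular $u' \in L^\infty(\mathbb{R}^+, L^2)$) and $\nabla u \in \mathcal{C}(\mathbb{R}^+, L^2)$; the two boundary memory terms are finite because $\lambda$ is a finite measure (by \eqref{surmesure}, $\lambda(\mathbb{R}^+) < \mu_0$), the factor $m\cdot\nu$ is bounded on $\partial\Omega_N$, and the inner integrals $\int_0^s (u'(\cdot - r,x))^2\,dr$ are controlled by the trace of $u'$ on $\partial\Omega_N$, which lies in $L^\infty_{\text{loc}}(\mathbb{R}^+, L^2(\partial\Omega_N))$ thanks to the $H^1$-bound on $u'$ and the trace theorem. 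Integrating in $s$ against the finite measure $\lambda$ and using the uniform-in-time $H^1$ bound yields finiteness of all three terms, with a bound of the form $E(t) \lesssim \|u_0\|_{H^2}^2 + \|u_1\|_{H^1}^2$.

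The main obstacle I anticipate is the uniform-in-time control of the boundary memory terms and of $\nabla u'$: unlike the bulk energy of the original problem, the presence of the delay/memory convolution $\int_0^t u'(t-s)\,d\mu(s)$ means a naive energy estimate could in principle grow, and it is precisely assumption \eqref{mu0} (equivalently the existence of $\lambda$ satisfying \eqref{surmesure}--\eqref{integration}) that tames this. I would therefore be careful to track, in the differentiated energy identity, the sign of the boundary production term and use the inequality $\mu_{\text{tot}} < \mu_0$ to absorb the memory contribution into the dissipative $\mu_0 u'$ term, so that the differentiated energy is genuinely non-increasing (or at worst bounded), rather than relying on a Gronwall argument that would only give exponential-in-time growth. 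Establishing that sign/absorption carefully is the crux; the continuous dependence on the data then follows from the explicit linear-in-data bound together with the density of smooth data and a limiting argument.
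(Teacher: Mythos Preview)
Your approach is essentially the paper's: differentiate the system, observe that $v=u'$ solves a problem of the same type, apply the energy bound for weak solutions to $v$, and then use the trace theorem to control the boundary memory terms in $E(t)$. Two points deserve sharpening.

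First, the ``extra forcing term on $\partial\Omega_N$'' you anticipate actually vanishes, and this is precisely why the hypothesis is $u_1\in H^1_0(\Omega)$ rather than $H^1_D(\Omega)$. The paper uses the identity (valid when $u'(0)=u_1=0$ on $\partial\Omega_N$)
\[
\int_0^t u'(t-s)\,d\mu(s)=\int_0^t\!\!\int_0^s u''(s-r)\,d\mu(r)\,ds,
\]
so that differentiating in $t$ gives exactly $\int_0^t u''(t-s)\,d\mu(s)$ with no boundary contribution at $s=0$. Hence $u'$ solves $(S)$ verbatim with data $(u_1,u_2)\in H^1_D(\Omega)\times L^2(\Omega)$, where $u_2=\lim_{t\to0}u''(t)=\Delta u_0$, and the standard energy bound for weak solutions (proved first, by absorbing the memory term via $\mu_{\mathrm{tot}}<\mu_0$) applies directly. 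You do not need to handle a forced problem.

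Second, for the last term of $E(t)$, finiteness of $\lambda$ alone is not enough: after the change of variable the inner integral becomes $\int_0^s (u'(\rho,x))^2\,d\rho$, which is of order $s$, so you need $\int_0^\infty s\,d\lambda(s)<\infty$. The paper gets this via Fubini and \eqref{integration}:
\[
\int_{\partial\Omega_N} m\cdot\nu \int_t^\infty\!\!\int_0^s (u'(s-r,x))^2\,dr\,d\lambda(s)\,d\sigma
\le \|m\|_\infty\,\|u'\|_{L^\infty(L^2(\partial\Omega))}\int_0^\infty \lambda([r,\infty))\,dr
\le \alpha^{-1}\|m\|_\infty\,\lambda(\mathbb{R}^+)\,\|u'\|_{L^\infty(L^2(\partial\Omega))}.
\]
With these two refinements your outline matches the paper's proof.
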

\begin{proof}
Let us first pick some solution of $(S)$ with $u_0 \in H^2(\Omega)
\cap H_D^1 (\Omega), u_1 \in H^1_{D} (\Omega)$. We define the
standard energy as
\begin{equation*}
E_0(t)=\frac{1}{2}\int_\Omega (u'(t,x))^2+|\nabla u(t,x)|^2 d x.
\end{equation*}
As in \cite{Ko}, it is classical that
\[E_0(0)-E_0(T)=-\int_0^T\int_{\partial\Omega_N}\partial_\nu u u'd\sigma d t.\]
Using the form of our boundary condition and Young inequality, one gets, for any $\epsilon>0$,
\begin{eqnarray*}
E_0(0)-E_0(T)&=&\int_0^T\int_{\partial\Omega_N}(m\cdot\nu)\left( \mu_0 u'(t)^2+u'(t)\int_0^tu'(t-s)d\mu(s)\right) d\sigma dt\\
& \geq &  \int_0^T\int_{\partial\Omega_N}(m\cdot\nu)\left(\left(
\mu_0-\frac{\epsilon}{2}\right) u'(t)^2-\frac{1}{2 \epsilon} \left(
\int_0^tu'(t-s)d\mu(s)\right)^2\right) d\sigma
\end{eqnarray*}
Using that $\mu\leq |\mu|$ and Cauchy-Schwarz inequality consequently give us
\[E_0(0)-E_0(T)\geq \int_{\partial\Omega_N}(m\cdot\nu)\left(\left( \mu_0-\frac{\epsilon}{2}\right)
\int_0^T u'(t)^2dt-\frac{\mu_{\text{tot}}}{2 \epsilon}
\int_0^T\int_0^t(u'(t-s))^2d|\mu|(s)\right)d \sigma. \] Using now
Fubini theorem two times, one can obtain the following identities
\begin{eqnarray*}
\int_0^T\int_0^t(u'(t-s))^2d|\mu|(s)dt&=&\int_0^T \left( \int_s^T u'(t-s)^2dt\right) d |\mu|(s)\\
&=&\int_0^T \left( \int_0^{T-s} u'(t)^2dt\right) d |\mu|(s)\\
&=&\int_0^T \left( \int_0^{T-t} d |\mu|(s)\right)u'(t)^2dt \\
\end{eqnarray*}
so that, using $|\mu|([0,T-t])\leq \mu_0$,
\[E_0(0)-E_0(T)\geq \int_{\partial\Omega_N}(m\cdot\nu)\left(\left( \mu_0-\frac{\epsilon}{2}-
\frac{\mu^2_{\text{tot}}}{2 \epsilon}\right) \int_0^T
u'(t)^2dt\right) d\sigma.\] The choice of $\epsilon
=\mu_{\text{tot}}$ finally gives us that $E_0(T)$ is bounded. Using
the density of $H^2 (\Omega) \cap H^1_D (\Omega)\times  H^1_{D}
(\Omega)$ in $H^1_D (\Omega)\times  L^2 (\Omega)$, we get the
boundedness of $E_0$ for solutions with initial data $u_0 \in H_D^1
(\Omega), u_1 \in L^2 (\Omega)$. In particular, if $u_0 \in H_D^1
(\Omega), u_1 \in L^2 (\Omega)$, we obtain that $u\in
L^{\infty}(\mathbb{R}^+,H^1(\Omega))$.

Let now $u$ be a solution of $(S)$ with $u_0 \in H^2 (\Omega) \cap
H^1_D (\Omega)$, $u_1 \in H^1_0 (\Omega)$. Using Theorem
\ref{Laplace}, one can define the limit in $L^2(\Omega)$ $u_2$ of
$u''(t)$ as $t\rightarrow0$ and in this situation, as in \cite{PP},
it is easy to see that $u'$ is solution of $(S)$ with initial data
$u_1\in H^1_D(\Omega)$ and $u_2\in L^2(\Omega)$.
\par \noindent Indeed, one can see that Fubini's theorem gives
\[\int_0^tu'(t-s)d\mu(s)=\int_0^t \left( \int_0^s u''(s-r)d\mu(r)\right) ds,\]
provided $u_1=0$ on $\partial\Omega_N$, so that
\[\frac{d}{d t} \left( \int_0^tu'(t-s)d\mu(s)\right) =\int_0^t u''(t-s)d\mu(s).\]
\par \noindent Using the proof above, one concludes that $u'\in L^{\infty}(\mathbb{R}^+,H^1(\Omega))$ which, thanks to a classical trace result, give that $u'\in L^{\infty}(\mathbb{R}^+,L^2(\partial \Omega))$.
\par The first three terms of the energy $E(t)$ are consequently defined for any time $t>0$. We only need to take a look at the last one to achieve our result. Using Fubini theorem again, one has
\[\int_t^{+\infty} \left( \int_0^s (u'(t-r))^2dr\right) d \lambda(s)=\int_0^{+\infty}u'(r)^2\left( \int_{\max(r,t)}^{+\infty}d \lambda(s)\right) dr\]
so that, using \eqref{integration},
\begin{eqnarray*}
\int_{\partial \Omega_N} m\cdot \nu \int_t^{\infty} \left(
\int_0^s (u'(s - r, x))^2 dr \right) d \lambda (s) d \sigma &\leq& \|m\|_{\infty}\|u'\|_{L^{\infty}(L^2(\partial \Omega))} \int_0^{+\infty} \lambda([r,+\infty)) dr\\
&\leq& \alpha^{-1} \|m\|_{\infty} \lambda(\mathbb{R}^+)\|u'\|_{L^{\infty}(L^2(\partial \Omega))}.
\end{eqnarray*}
\end{proof}

\subsection{Compactly supported measure and semigroup approach}

In this second approach, we assume that $\mu$ is supported in $[0,\tau]$ and that
$\overline{\partial \Omega_D} \cap \overline{\partial \Omega_N} =
   \varnothing$. We here simply follow the result obtained by Nicaise-Pignotti (\cite{NP2}).

\par \noindent First, observe that, for $t > \tau$, $(S)$ is
reduced to
\[ \left\{ \begin{array}{l}
     u'' - \Delta u = 0\\
     u = 0\\
     \partial_{\nu} u + m\cdot \nu \left( \mu_0 u' (t) + \int_0^{\tau} u' (t - s)
     d \mu (s) \right) = 0\\
     u (0) = u_0\\
     u' (0) = u_1
   \end{array} \right. \left. \begin{array}{l}
     \text{in } (\tau, + \infty) \times \Omega \hspace{0.25em},\\
     \text{on } (\tau, + \infty) \times \partial \Omega_D \hspace{0.25em},\\
     \text{on } (\tau, + \infty) \times \partial \Omega_N \hspace{0.25em},\\
     \text{in } \Omega \hspace{0.25em},\\
     \text{in } \Omega \hspace{0.25em} .
   \end{array} \right. \]
We define $X_\tau=L^2(\partial \Omega_N\times (0,1)\times(0,\tau),d\sigma d\rho sd\mu(s)))$ and $Y_\tau=L^2(\partial \Omega_N\times(0,\tau);H^1(0,1),d\sigma sd\mu(s)).$
\par \noindent One can use the same strategy as in the proof of Theorem 2.1 in
{\cite{NP2}} to get

\begin{theorem}\label{Semigroup}
\begin{itemize}
\item If $u(\tau) \in H^1_D (\Omega)$, $u'(\tau) \in L^2(\Omega)$ and $u'(\tau-\rho s,x) \in X_\tau$, $(S)$ has a unique solution $u \in \mathcal{C}([\tau,+\infty),H^1_D(\Omega))\cap \mathcal{C}^1([\tau,+\infty),L^2(\Omega))$. Moreover, if $u(\tau) \in H^2(\Omega) \cap H^1_D (\Omega)$, $u'(\tau) \in H^1
(\Omega)$ and $u'(\tau-\rho s,x) \in Y_\tau$, then
\[ \left\{ \begin{array}{l}
u\in \mathcal{C}^1([\tau,+\infty),H_D^1(\Omega))\cap \mathcal{C}([\tau,+\infty),H^2(\Omega));\\
t\mapsto s u''(t-\rho s,x) \in \mathcal{C}([\tau,+\infty),X_\tau).
\end{array} \right. \]
\item If $(u_\tau^n(x),v_\tau^n(x),g^n(s,\rho,x))\rightarrow (u(\tau,x),u'(\tau,x),u'(\tau-\rho s,x))$ in $H^1_D(\Omega)\times L^2(\Omega) \times X_\tau$, then the solution $u^n$ of
\[ \left\{ \begin{array}{l}
     u'' - \Delta u = 0\\
     u = 0\\
     \partial_{\nu} u + m\cdot \nu \left( \mu_0 u' (t) + \int_0^{\tau} u' (t - s)
     d \mu (s) \right) = 0\\
     u (\tau) = u_\tau^n\\
     u' (\tau) = u_\tau^n\\
     u'(x,\tau-\rho s)=g^n(x,s,\rho)
   \end{array} \right. \left. \begin{array}{l}
     \text{in } (\tau, + \infty) \times \Omega \hspace{0.25em},\\
     \text{on } (\tau, + \infty) \times \partial \Omega_D \hspace{0.25em},\\
     \text{on } (\tau, + \infty) \times \partial \Omega_N \hspace{0.25em},\\
     \text{in } \Omega \hspace{0.25em},\\
     \text{in } \Omega \hspace{0.25em},\\
     \text{in } \Omega_N\times(0,\tau)\times (0,1) \hspace{0.25em}
   \end{array} \right. \]
is such that $E(u^n)$ converges uniformly with respect to time towards $E(u)$.
\end{itemize}
\end{theorem}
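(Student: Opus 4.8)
The plan is to follow the Nicaise--Pignotti semigroup strategy, whose essential device is to convert the memory-delay boundary term into a transport equation and then to recast $(S)$ for $t>\tau$ as an autonomous Cauchy problem $U'=\mathcal{A}U$ on the Hilbert space $\mathcal{H}=H^1_D(\Omega)\times L^2(\Omega)\times X_\tau$. First I would introduce the auxiliary unknown $z(x,\rho,s,t)=u'(t-\rho s,x)$ for $(x,\rho,s)\in\partial\Omega_N\times(0,1)\times(0,\tau)$, which satisfies the transport equation $s\,\partial_t z+\partial_\rho z=0$ together with $z(x,0,s,t)=u'(t,x)$ and, crucially, $z(x,1,s,t)=u'(t-s,x)$, so that the boundary memory term rewrites as $\int_0^\tau z(x,1,s,t)\,d\mu(s)$. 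Setting $U=(u,u',z)$, the operator $\mathcal{A}U=(u',\,\Delta u,\,-s^{-1}\partial_\rho z)$ carries into its domain the conditions $u=0$ on $\partial\Omega_D$, the dissipative condition $\partial_\nu u+m\cdot\nu\big(\mu_0 u'+\int_0^\tau z(\cdot,1,s)\,d\mu(s)\big)=0$ on $\partial\Omega_N$, and the compatibility $z(\cdot,0,s)=u'_{|\partial\Omega_N}$, together with the regularity $\Delta u\in L^2(\Omega)$. The weight $s\,d\mu(s)$ defining $X_\tau$ is precisely the one for which, after the substitution $r=\rho s$, the history part of the energy $E$ coincides (up to the bounded nonnegative factor $m\cdot\nu$, which is $\geq 0$ on $\partial\Omega_N$ by \eqref{sign}) with $\tfrac12\|z\|_{X_\tau}^2$.

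Next I would show that $\mathcal{A}$ is m-dissipative and invoke the Lumer--Phillips theorem. Dissipativity (after subtracting a multiple of the identity if needed) reduces to the energy identity already performed in the proof of the previous proposition: integrating by parts in $\Delta u$ and in $\partial_\rho z$ reproduces the surface integral over $\partial\Omega_N$, and Cauchy--Schwarz together with the hypothesis $\mu_{\text{tot}}<\mu_0$ from \eqref{mu0} bounds the mixed memory term by the genuinely dissipative one. The maximality, i.e.\ surjectivity of $\lambda I-\mathcal{A}$ for some $\lambda>0$, is where I expect the real work: given $(f,g,h)\in\mathcal{H}$ I would first solve the transport ODE $\lambda z+s^{-1}\partial_\rho z=h$ explicitly in $\rho$ with datum $z(\cdot,0,s)=v_{|\partial\Omega_N}$, then eliminate $v=\lambda u-f$, reducing the remaining system to a single elliptic boundary-value problem for $u\in H^1_D(\Omega)$ with a Robin-type condition on $\partial\Omega_N$ coupling $u$ to the now-explicit $z$. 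Solvability then follows from Lax--Milgram, the coercivity of the associated bilinear form being guaranteed once more by $\mu_{\text{tot}}<\mu_0$, which keeps the boundary contribution of the correct sign; here the standing hypothesis $\overline{\partial \Omega_D}\cap\overline{\partial \Omega_N}=\varnothing$ makes the trace terms and the elliptic regularity used to recover $\Delta u\in L^2(\Omega)$ clean.

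With $\mathcal{A}$ generating a (quasi-)contraction semigroup $(e^{t\mathcal{A}})_{t\geq 0}$, the first assertion is immediate: data in $\mathcal{H}$ yield mild solutions in $\mathcal{C}([\tau,+\infty),\mathcal{H})$, while data in $D(\mathcal{A})$ yield classical solutions with the stated extra regularity, since membership in $D(\mathcal{A})$ forces $u\in H^2(\Omega)\cap H^1_D(\Omega)$, $u'\in H^1(\Omega)$ and $z\in Y_\tau$ (the component with $H^1$ regularity in $\rho$), whence $t\mapsto s\,u''(t-\rho s,x)\in\mathcal{C}([\tau,+\infty),X_\tau)$. The second assertion follows from the same contraction estimate combined with the identification of the previous paragraph: since $E$ is a quadratic form in $U$ with $E\lesssim\|U\|_{\mathcal{H}}^2$ and $\|U^n(t)-U(t)\|_{\mathcal{H}}\lesssim\|U^n(\tau)-U(\tau)\|_{\mathcal{H}}$ uniformly in $t$, convergence of the initial data in $H^1_D(\Omega)\times L^2(\Omega)\times X_\tau$ gives $|E(u^n)(t)-E(u)(t)|\lesssim\|U^n(\tau)-U(\tau)\|_{\mathcal{H}}\big(\|U^n\|_{\mathcal{H}}+\|U\|_{\mathcal{H}}\big)\to 0$ uniformly in time (and by Remark \ref{lrk1} this controls the energies built on measures dominated by $\lambda$ as well). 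The main obstacle is thus the maximality step---solving the coupled resolvent problem and verifying coercivity under \eqref{mu0}---rather than the semigroup bookkeeping, which is routine once $\mathcal{A}$ is shown to be m-dissipative.
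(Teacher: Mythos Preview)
Your proposal is correct and follows essentially the same route as the paper: introduce the auxiliary history variable $z(x,\rho,s,t)=u'(t-\rho s,x)$, recast $(S)$ for $t>\tau$ as the abstract Cauchy problem $U'=\mathcal{A}U$ on $\mathcal{H}=H^1_D(\Omega)\times L^2(\Omega)\times X_\tau$ with the operator and domain you describe, and then invoke semigroup theory. The paper simply defers the m-dissipativity of $\mathcal{A}$ to the proof of Theorem~2.1 in \cite{NP2} rather than spelling out the Lax--Milgram resolvent argument you sketch, and it does not write out the energy-convergence bullet explicitly; your additional detail on those points is sound and compatible with the paper's approach.
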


\begin{proof}
  We define $z (x, \rho, s, t) = u' (t - \rho s, x)$ for $x \in \partial
  \Omega_N$, $t > \tau$, $s \in (0, \tau)$, $\rho \in (0, 1)$.

  \noindent Problem $(S)$ is then equivalent to
  \[ \begin{array}{l}
       u'' - \Delta u = 0\\
       \\
       s z_t (x, \rho, s, t) + z_{\rho} (x, \rho, s, t) = 0\\
       \\
       u = 0\\
       \\
       \partial_{\nu} u + m\cdot \nu (\mu_0 u' (t) + \int_0^{\tau} u' (t - s) d
       \mu (s)) = 0\\
       \\
       u (\tau) = u(\tau)\\
       \\
       u' (\tau) = u(\tau)\\
       \\
       z (x, 0, t, s) = u' (t, x)\\
       \\
       z (x, \rho, \tau, s) = f_0 (x, \rho, s)
     \end{array} \begin{array}{l}
       \text{in } (\tau, + \infty) \times \Omega \hspace{0.25em},\\
       \\
       \tmop{in} \partial \Omega_N \times (0, 1) \times (0, \tau) \times
       (\tau, + \infty),\\
       \\
       \text{on } (\tau, + \infty) \times \partial \Omega_D \hspace{0.25em},\\
       \\
       \text{on } (\tau, + \infty) \times \partial \Omega_N \hspace{0.25em},\\
       \\
       \text{in } \Omega \hspace{0.25em},\\
       \\
       \text{in } \Omega \hspace{0.25em},\\
       \\
       \tmop{on} \partial \Omega_N \times (\tau, + \infty) \times (0, \tau),\\
       \\
       \tmop{on} \partial \Omega_N \times (0, 1) \times (0, \tau),
     \end{array} \]
  where $f_0 (x, \rho, s) = u' (\tau - \rho s, x)$.

  \noindent Consequently, $(S)$ can be rewritten as
  \[ \left\{ \begin{array}{l}
       U' =\mathcal{A}U\\
       U (\tau) = (u(\tau), u'(\tau), f_0)^T
     \end{array} \right. \]
  where the operator is defined by
  \[ \mathcal{A} \left( \begin{array}{l}
       u\\
       v\\
       z
     \end{array} \left) = \left( \begin{array}{c}
       v\\
       \Delta u\\
       - s^{- 1} z_\rho
     \end{array} \right) \right. \right. \]
  with domain
\begin{eqnarray*}
\mathcal{D}(\mathcal{A}) =& \{ & (u, v, z)^T\in H_D^1 (\Omega)
\times L^2 (\Omega) \times Y_\tau ;  \Delta u \in L^2 (\Omega),  \\
& & \partial_{\nu} u (x) = - (m\cdot \nu) \left( \mu_0 v (t) +
\int_0^{\tau} z (x, 1, s) d \mu (s) \right) \tmop{on} \partial
\Omega_N, v (x) = z (x, 0, s) \tmop{on} \partial \Omega_N\times(0,\tau) \ \}\ .\\
\end{eqnarray*}
The proof of Theorem 2.1 in {\cite{NP2}} shows us that $\mathcal{A}$ is a maximal monotone operator on the Hilbert space $\mathcal{H}:=H_D^1 (\Omega) \times L^2 (\Omega) \times X_\tau$ endowed with the product topology. It consequently generates a contraction semigroup on $\mathcal{H}$.
Moreover, if $(u(\tau,x),u'(\tau,x),u'(\tau-\rho s,x)) \in \mathcal{D(A)}$, one gets that
\[ \left\{ \begin{array}{l}
u\in \mathcal{C}^1([\tau,+\infty),H_D^1(\Omega))\cap \mathcal{C}([\tau,+\infty),H^2(\Omega));\\
t\mapsto s u''(t-\rho s,x) \in \mathcal{C}([\tau,+\infty),X_\tau).
\end{array} \right. \]
This ends the proof.
\end{proof}

We can consequently deduce another way to obtain solutions:

\begin{corollary}
Suppose that $u_0 \in H^2 (\Omega) \cap H^1_D (\Omega)$, $u_1 \in
H^1_0(\Omega)$, then $(S)$ has a unique solution $u \in
\mathcal{C}([\tau,+\infty),H^1_D(\Omega))\cap
\mathcal{C}^1([\tau,+\infty),L^2(\Omega))$.
\end{corollary}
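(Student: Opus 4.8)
The plan is to produce the solution on $[\tau,+\infty)$ by concatenating the two previous results: Theorem \ref{Laplace} is used only to generate an admissible state at the switching time $\tau$, and Theorem \ref{Semigroup} is then used to propagate this state for $t>\tau$. Throughout we exploit the standing assumptions of this subsection, namely that $\mu$ is supported in $[0,\tau]$ and $\overline{\partial\Omega_D}\cap\overline{\partial\Omega_N}=\varnothing$, the latter being exactly what makes the reduced problem for $t>\tau$ amenable to the semigroup framework.

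First I would invoke Theorem \ref{Laplace}. Since $u_0\in H^2(\Omega)\cap H^1_D(\Omega)$ and $u_1\in H^1_0(\Omega)$, it furnishes a unique solution $u$ of $(S)$ on $\mathbb{R}^+$ with $u\in\mathcal{C}^1(\mathbb{R}^+,H^1(\Omega))\cap\mathcal{C}^2(\mathbb{R}^+,L^2(\Omega))$. Evaluating at $t=\tau$ and using that the Dirichlet condition $u=0$ on $\partial\Omega_D$ persists for every time, I obtain $u(\tau)\in H^1_D(\Omega)$ and $u'(\tau)\in H^1(\Omega)\subset L^2(\Omega)$, so the first two hypotheses of the first item of Theorem \ref{Semigroup} are satisfied.

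The substantive step is to verify that the history datum $f_0(x,\rho,s)=u'(\tau-\rho s,x)$ belongs to $X_\tau$. For this I would use the Proposition above, which gives $u'\in L^\infty(\mathbb{R}^+,H^1(\Omega))$, hence $u'\in L^\infty(\mathbb{R}^+,L^2(\partial\Omega))$ by the trace theorem. Since for $s\in(0,\tau)$ and $\rho\in(0,1)$ the time $\tau-\rho s$ stays in $[0,\tau]\subset\mathbb{R}^+$, Fubini's theorem yields
\[
\int_{\partial\Omega_N}\int_0^1\int_0^\tau |u'(\tau-\rho s,x)|^2\, s\,d\mu(s)\,d\rho\,d\sigma \leq \tau\,\mu([0,\tau])\,\|u'\|_{L^\infty(\mathbb{R}^+,L^2(\partial\Omega))}^2 < \infty ,
\]
because $\mu$ is finite and supported in $[0,\tau]$. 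Thus $f_0\in X_\tau$, and the triple $(u(\tau),u'(\tau),f_0)$ lies in $\mathcal{H}=H^1_D(\Omega)\times L^2(\Omega)\times X_\tau$.

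Feeding this state into the first item of Theorem \ref{Semigroup} produces a unique solution on $[\tau,+\infty)$ in the class $\mathcal{C}([\tau,+\infty),H^1_D(\Omega))\cap\mathcal{C}^1([\tau,+\infty),L^2(\Omega))$; by uniqueness it coincides with the restriction to $[\tau,+\infty)$ of the solution given by Theorem \ref{Laplace}, which delivers both existence and the claimed regularity. The main (and essentially only) obstacle is the membership $f_0\in X_\tau$; once the trace bound $u'\in L^\infty(\mathbb{R}^+,L^2(\partial\Omega))$ is available this is routine, and the rest is a direct concatenation of the two preceding results.
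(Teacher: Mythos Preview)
Your proposal is correct and follows essentially the same route as the paper: use Theorem~\ref{Laplace} to produce the state at time $\tau$, verify via Fubini that the history datum $u'(\tau-\rho s,x)$ lies in $X_\tau$, and then invoke Theorem~\ref{Semigroup}. The only minor difference is that you appeal to the Proposition for $u'\in L^\infty(\mathbb{R}^+,H^1(\Omega))$, whereas the paper simply uses the continuity $u\in\mathcal{C}^1([0,\tau],H^1_D(\Omega))$ already granted by Theorem~\ref{Laplace}, which on the compact interval $[0,\tau]$ is enough to bound the trace of $u'$ and make the Fubini computation go through.
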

\begin{proof}
Thanks to Theorem \ref{Laplace}, one only needs to check that if $u\in \mathcal{C}^1([0,\tau],H^1_D(\Omega))$ then $u'(x,\tau-\rho s)\in X_\tau$; and this is straightforward using Fubini theorem.
\end{proof}

\section{Linear stabilization}

We begin with  a classical elementary result due to Komornik
{\cite{Ko}}:

\begin{lemma}\label{komornik}
  Let $E : [0, + \infty [\rightarrow \mathbb{R}_+$ be a non-decreasing
  function that fulfils:
  \[ \forall t \geq 0 \text{, } \int_t^{\infty} E (s) ds \leq TE (t) , \]
 for some $T > 0$.
  Then, one has:
  \begin{eqnarray*}
    \forall t \geqslant T ,  E (t) \leq E (0)\exp \left(1 -
    \frac{t}{T} \right) .
  \end{eqnarray*}
\end{lemma}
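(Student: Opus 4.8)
The plan is to reduce the integral inequality to a first-order differential inequality for the tail integral of $E$. Set $F(t) = \int_t^{\infty} E(s)\,ds$. Taking $t=0$ in the hypothesis already shows $F(0) \le T E(0) < \infty$, so $E$ is integrable on $[0,\infty)$ and $F$ is a well-defined, non-negative, absolutely continuous function with $F'(t) = -E(t)$ for almost every $t$. In this notation the assumption $\int_t^{\infty} E(s)\,ds \le T E(t)$ reads exactly $F(t) \le -T F'(t)$ a.e., which is the inequality I want to integrate.

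Next I would integrate this differential inequality by the standard integrating-factor trick. Introduce $G(t) = e^{t/T} F(t)$, which is again absolutely continuous, and compute $G'(t) = e^{t/T}\bigl(\tfrac{1}{T}F(t) + F'(t)\bigr) = e^{t/T}\bigl(\tfrac{1}{T} F(t) - E(t)\bigr) \le 0$ for almost every $t$, the last inequality being precisely the hypothesis recast in the first step. Since an absolutely continuous function with a.e.\ non-positive derivative is non-increasing, one gets $G(t) \le G(0) = F(0)$ for all $t \ge 0$, that is,
\[ F(t) \le F(0)\, e^{-t/T} \le T E(0)\, e^{-t/T}. \]

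Finally I would convert the decay of the tail $F$ back into pointwise decay of $E$, and this is where the monotonicity of $E$ is used (here $E$ must be non-increasing, which is what makes the tail converge and the estimate a genuine decay). For $t \ge T$ and every $s \in [t-T,t]$ one has $E(s) \ge E(t)$, hence
\[ T E(t) \le \int_{t-T}^{t} E(s)\,ds \le \int_{t-T}^{\infty} E(s)\,ds = F(t-T) \le T E(0)\, e^{-(t-T)/T}. \]
Dividing by $T$ and simplifying the exponent yields $E(t) \le E(0)\, e^{1 - t/T}$ for $t \ge T$, as claimed. The only delicate point is the regularity bookkeeping: since $E$ is merely monotone and not assumed continuous, the identity $F' = -E$ holds only almost everywhere, so the differentiation of $G$ and the passage from $G' \le 0$ to $G$ non-increasing must be justified through absolute continuity rather than the classical fundamental theorem of calculus. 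Everything else is elementary.
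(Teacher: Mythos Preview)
Your proof is correct and follows the standard argument due to Komornik; it is in fact essentially the proof given in \cite{Ko}. The paper itself does not supply a proof of this lemma at all---it merely states the result and attributes it to Komornik---so there is nothing to compare against beyond noting that your write-up matches the classical one. You are also right to flag that the hypothesis should read ``non-increasing'' rather than ``non-decreasing'': this is a typo in the statement (the energy in the paper is shown to be non-increasing in the subsequent lemma, and the conclusion would be vacuous otherwise). Your handling of the regularity issue via absolute continuity of $F$ and $G$ is the appropriate level of care, since $E$ is only assumed monotone.
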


We will now show the following stabilization result:

\begin{theorem}\label{tdecay}
  Assume (\ref{m})-(\ref{mu0}). Then, if $u_0 \in H^2 (\Omega) \cap H^1_D (\Omega)$, $u_1 \in
  H^1_0
(\Omega)$, there exists $T>0$ such  that the energy $E (t)$ of the
solution $u$ of $(S)$ satisfies:
  \begin{eqnarray*}
    & \forall t \geqslant T, & E (t) \leq E (0) \exp \left(1 -
    \frac{t}{T} \right) .
  \end{eqnarray*}
\end{theorem}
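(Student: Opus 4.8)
The plan is to verify the two hypotheses of Komornik's inequality (Lemma \ref{komornik}) for the energy $E$: that $E$ is non-increasing and that there is a constant $T_0>0$ with $\int_t^{\infty}E(s)\,ds\le T_0\,E(t)$ for every $t\ge 0$; the stated decay is then exactly its conclusion. The regularity granted by Theorem \ref{Laplace} for the data $u_0\in H^2(\Omega)\cap H^1_D(\Omega)$, $u_1\in H^1_0(\Omega)$ (in particular $\Delta u(t)\in L^2(\Omega)$ and $\partial_\nu u(t)_{|\partial\Omega_N}\in H^{1/2}(\partial\Omega_N)$) justifies all the integrations by parts performed below.

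First I would establish monotonicity. Differentiating $E$ and using $E_0'(t)=\int_{\partial\Omega_N}\partial_\nu u\,u'\,d\sigma$ together with the boundary condition of $(S)$, the memory contributions $E_1,E_2$ (the second and third terms of $E$) are designed so that the terms created by differentiating their inner time integrals cancel, leaving a boundary expression in $u'(t)$ and $\int_0^t u'(t-s)\,d\mu(s)$. Bounding the memory term by Cauchy--Schwarz, $\big(\int_0^t u'(t-s)\,d\mu(s)\big)^2\le \mu_{\text{tot}}\int_0^t(u'(t-s))^2\,d|\mu|(s)$, and using $|\mu|\le\lambda$ with (\ref{surmesure})--(\ref{integration}), the assumption $\mu_{\text{tot}}<\mu_0$ from (\ref{mu0}) leaves a strictly positive coefficient in front of $\int_{\partial\Omega_N}(m\cdot\nu)(u'(t))^2\,d\sigma$. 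This both gives $E'(t)\le 0$ and yields a quantitative dissipation bound $\int_{\partial\Omega_N}(m\cdot\nu)(u'(t))^2\,d\sigma\lesssim -E'(t)$ that will be reused below.

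For the integral inequality I would run the multiplier method on $(S,T)\times\Omega$ with the Rellich multiplier $m\cdot\nabla u$ corrected by a zero-order multiplier $a\,u$. The resulting interior integrand is $(\tmop{div}(m)-a)(u')^2+(a-\tmop{div}(m))|\nabla u|^2+2\,\nabla u^{\top}(\nabla m)^s\nabla u$; bounding the last term below by $2\lambda_m|\nabla u|^2$ and choosing any constant $a$ with $\sup_{\bar\Omega}(\tmop{div}(m)-2\lambda_m)<a<\inf_{\bar\Omega}\tmop{div}(m)$ --- possible exactly by (\ref{m}) --- makes this integrand bounded below by $c(m)\big((u')^2+|\nabla u|^2\big)$. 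Hence $c(m)\int_S^T E_0\,dt$ is controlled by boundary integrals over $\partial\Omega$ plus the endpoint terms $\big[\int_\Omega(2\,u'\,m\cdot\nabla u+a\,u\,u')\big]_S^T$, the latter being $\lesssim E(S)+E(T)\lesssim E(S)$ by the monotonicity just proved (the $\int_\Omega u\,u'$ part being handled through the Poincaré inequality on $H^1_D(\Omega)$, valid since $\partial\Omega_D$ has positive measure, together with $E(T)\le E(S)$).

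The delicate step is the boundary analysis, which I expect to be the main obstacle. On $\partial\Omega_D$ one has $u'=0$ and $\nabla u=(\partial_\nu u)\nu$, so the contribution reduces to $\int(m\cdot\nu)(\partial_\nu u)^2\le 0$ by the sign condition (\ref{sign}) and may be discarded. On $\partial\Omega_N$ I would substitute $\partial_\nu u=-(m\cdot\nu)\big(\mu_0u'+\int_0^tu'(t-s)\,d\mu(s)\big)$ and split $\nabla u$ into normal and tangential parts: the tangential term $|\nabla_T u|^2$ appears with the favourable sign $-(m\cdot\nu)\le 0$ and the cross terms are absorbed by Young's inequality, leaving an expression controlled by $\int_{\partial\Omega_N}(m\cdot\nu)\big((u'(t))^2+(\int_0^tu'(t-s)\,d\mu(s))^2\big)$. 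As in the monotonicity step the squared memory term is bounded by $\mu_{\text{tot}}\int_0^t(u'(t-s))^2\,d|\mu|(s)$, and after a Fubini rearrangement in time (exactly as in the proof of the preceding proposition) these contributions are dominated, thanks to (\ref{integration}) and $\mu_{\text{tot}}<\mu_0$, by the dissipation $-\int_S^T E'\,dt\le E(S)$ and by the memory parts $E_1,E_2$ of $E$. Collecting everything yields $\int_S^T E\,dt\lesssim E(S)$ uniformly in $T$; letting $T\to\infty$ gives $\int_S^\infty E\,dt\le T_0\,E(S)$ for some $T_0>0$, and Lemma \ref{komornik} concludes.
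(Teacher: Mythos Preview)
Your outline follows the paper's proof closely: Komornik's inequality, the multiplier $Mu=2m\cdot\nabla u+a\,u$ with $a$ chosen via (\ref{m}), the sign discussion on $\partial\Omega_D$, and the substitution of the feedback on $\partial\Omega_N$ are all exactly as in the paper (the paper works with $E(S)-E(T)$ rather than $E'(t)$, which avoids issues when $\lambda$ has atoms, but this is cosmetic).

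There is, however, one genuine gap. The multiplier identity only yields $\int_S^T E_0\,dt\lesssim E(S)$; to obtain $\int_S^T E\,dt\lesssim E(S)$ you must separately estimate the time integral of the memory pieces $\int_S^T(E_1+E_2)\,dt$. Your phrase ``dominated \dots\ by the memory parts $E_1,E_2$ of $E$'' does not close this loop: at best it gives $\int_S^T E_0\,dt\lesssim E(S)+\int_S^T(E_1+E_2)\,dt$, which is circular. In the paper this is a dedicated lemma: Fubini together with property (\ref{integration}) of $\lambda$ gives
\[
\int_S^T E_1\,dt\ \lesssim\ \int_S^T\!\!\int_{\partial\Omega_N}(m\cdot\nu)\!\int_0^t(u'(t-s))^2\,d\lambda(s)\,d\sigma\,dt,
\]
and $\int_S^T E_2\,dt$ is bounded by the same quantity plus $\int_S^{\infty}\!\int_{\partial\Omega_N}(m\cdot\nu)(u')^2\,d\sigma\,dt$; both right-hand sides are then absorbed into $E(S)$ via the dissipation inequality. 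Relatedly, your dissipation step should record not only $\int_{\partial\Omega_N}(m\cdot\nu)(u')^2\lesssim -E'$ but also the companion bound $\int_{\partial\Omega_N}(m\cdot\nu)\int_0^t(u'(t-s))^2\,d\lambda(s)\lesssim -E'$; this is what the paper's energy lemma actually proves (by balancing the Young parameter so that both coefficients stay positive, using $\lambda(\mathbb{R}^+)<\mu_0$), and it is precisely this second form that is needed to close the estimate for $\int_S^T(E_1+E_2)\,dt$ above and for the $\partial\Omega_N$ multiplier terms. Once you make these two points explicit, your argument coincides with the paper's.
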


\begin{proof}
Our goal is to perform the multiplier method and to deal with the delay
terms to show that one can apply Lemma \ref{komornik} to the energy.

\begin{lemma}\label{Energy}
There exists $C > 0$, such that, for any solution $u$ of $(S)$ and any $S\leq T$,
\[ E(S)-E(T) \geqslant C \int_S^T\int_{\partial \Omega_N} (m\cdot \nu) \left( (u' (t))^2
+ \int^t_0 (u' (t - s))^2 d \lambda (s) \right) d \sigma dt . \]
In particular, the energy is a non-increasing function of time.
\end{lemma}
\begin{proof}
We start from the classical result that
\[ E_0(S)-E_0(T) = -\int_S^T\int_{\partial\Omega} \partial_\nu u u' d \sigma dt.\]
As above, one gets, for any $\epsilon>0$,
\[E_0(S)-E_0(T)\geq \int_S^T\int_{\partial\Omega_N}(m\cdot\nu)\left(\left( \mu_0-\frac{\epsilon}{2}\right)  u'(t)^2-\frac{\mu_{\text{tot}}}{2 \epsilon} \int_0^T\int_0^t(u'(t-s))^2d|\mu|(s)\right)d \sigma dt \]
We will now split $E-E_0$ in two terms:
$$\left[ E-E_0\right] _S^T=-\frac{1}{2}\left( \int_{\partial \Omega_N} (m\cdot\nu) [f(t,x)-g(t,x)]_S^T d \sigma\right),$$
where
\begin{eqnarray*}
f(t,x)=\int_0^t\left( \int_0^s(u'(t-r))^2dr\right) d\lambda(s),\\
g(t,x)=\int_0^t\left( \int_0^su'(r)^2dr\right) d\lambda(s).
\end{eqnarray*}
A change of variable allows us to get
\[f(t,x)=\int_0^t\int_0^t u'(r)^2drd\lambda(s)-\int_0^t\int_0^{t-s} u'(r)^2drd\lambda(s).\]
An application of Fubini theorem consequently gives us
\[f(t,x)-g(t,x)=\int_0^t u'(r)^2 \lambda([0,r])dr-\int_0^t\int_0^{t-s} u'(r)^2drd\lambda(s)\]
and, as above, one can use Fubini theorem to deduce that
\[ \int_S^T\int_0^t (u'(t-s))^2d\lambda(s)dt=\left[ \int_0^t\int_0^{t-s} u'(r)^2drd\lambda(s)\right] _S^T.\]
One now uses $\lambda ([0, r]) \leqslant \lambda(\mathbb{R}^+)$ to conclude that
$$\left[ E-E_0\right] _S^T \geq \frac{1}{2}\int_S^T\int_{\partial \Omega_N} m\cdot\nu \left( \int_0^t (u'(t-s))^2d\lambda(s)-\lambda(\mathbb{R}^+) u'(t)^2\right) d \sigma dt.$$
Summing up and using that $|\mu| \leq \lambda $, we have obtained that
\begin{eqnarray*}
E(S)-E(T)  \geqslant  \int_S^T\int_{\partial \Omega_N} (m\cdot \nu)
\left( \left( \mu_0 - \frac{\lambda(\mathbb{R}^+) + \epsilon}{2}
\right) u' (t)^2 + \frac{1}{2} \left( 1 -
\frac{\mu_{\text{tot}}}{\epsilon} \right) \int^t_0 (u' (t - s))^2 d
\lambda (s)\right)   d \sigma dt.
\end{eqnarray*}
We finally chose $\epsilon = \mu_0$ which gives us our result since $\lambda(\mathbb{R}^+) < \mu_0$.
\end{proof}

In the multiplier method, one may use Rellich's relation, especially
in the context of singularities. In our framework \eqref{disjoint},
the following Rellich inequality (see the proof of Theorem 4 in
\cite{CLO} or Proposition 4 in \cite{BLM}) is useful

\begin{proposition}\label{Rellich}
    For any $u\in H^1 (\Omega)$ such that $$\Delta u \in L^2 (\Omega), u_{| \partial \Omega_D} \in H^{\frac{3}{2}}
    (\partial \Omega_D) \text{ and } \partial_{\nu} u_{| \partial \Omega_N} \in H^{\frac{1}{2}} (\partial
    \Omega_N).$$
    Then it satisfies $2 \partial_{\nu} u (m. \nabla u) - (m\cdot \nu) | \nabla u|^2 \in
    L^1 (\partial \Omega)$ and we have the following inequality
    \begin{eqnarray*}
      2 \int_{\Omega} \triangle u (m. \nabla u) dx & \leq \int_{\Omega_{}}
      (\tmop{div} (m) I - 2 (\nabla m)^s) (\nabla u, \nabla u)^{} dx & +
      \int_{\partial \Omega} (2 \partial_{\nu} u (m. \nabla u) - (m\cdot \nu) |
      \nabla u|^2) d \sigma.
    \end{eqnarray*}
  \end{proposition}

  With this result, we can prove the following multiplier estimate:

  \begin{lemma}
    \label{IPP}Let $Mu = 2 m. \nabla u + a_0 u$, where $\left. a_0 : =
    \frac{1}{2} \left( \inf_{\bar{\Omega}} \tmop{div} (m) \right) +
    \sup_{\bar{\Omega}} \left( \tmop{div} (m) - 2 \lambda_m \right) \right)$.
    Then under the assumptions of Theorem \ref{tdecay}, the following inequality holds true:
    \[ c (m) \int_S^T \int_{\Omega} ( u')^2 + |\nabla u|^2 dxdt
       \leq - [_{} \int_{\Omega} u' Mu]^T_S \]
    \[ + \int_S^T \int_{\partial \Omega_N} Mu \partial_{\nu} u + (m\cdot \nu)
       ((u')^2 - | \nabla u|^2) d \sigma dt. \]
  \end{lemma}

  \begin{proof}
Firstly, we consider $M=2m\cdot\nabla u+a u$ where $a$ will be fixed later. Using the fact that $u$ is a regular solution of $(S)$ and noting that
    $u^{\prime \prime} Mu = (u' Mu)' - u' Mu'$, an integration by parts gives:
    \begin{eqnarray*}
      0 & = & \int_S^T \int_{\Omega} (u^{\prime \prime} - \triangle u)
      Mudxdt\\
      & = & \left[ \int_{\Omega} u' Mudx \right]_S^T - \int_S^T \int_{\Omega}
      (u' Mu' + \triangle uMu) dxdt.
    \end{eqnarray*}
    Now, thanks to Proposition \ref{Rellich}, we have :
    \begin{eqnarray*}
      \int_{\Omega} \triangle uMudx & \leq & a \int_{\Omega} \triangle uudx +
      \int_{\Omega} (\tmop{div} (m) I - 2 (\nabla m)^s) (\nabla u, \nabla
      u)^{} dx\\
      &  & + \int_{\partial \Omega} (2 \partial_{\nu} u (m. \nabla u) - (m.
      \nu) | \nabla u|^2) d \sigma .
    \end{eqnarray*}
    Consequently, Green-Riemann formula leads to:
    \[ \int_{\Omega} \triangle uMudx = \int_{\Omega} ((\tmop{div} (m) - a) I -
       2 (\nabla m)^s) (\nabla u, \nabla u)^{} dx + \int_{\partial \Omega}
       (\partial_{\nu} uMu - (m\cdot \nu) | \nabla u|^2) d \sigma . \]
    Using the fact that $\nabla u =$ $\partial_{\nu} u \nu$ on $\partial
    \Omega_D$ and $m\cdot \nu \leqslant 0$ on $\partial \Omega_D$, we have then:
    \[ \int_{\Omega} \triangle uMudx \leq \int_{\Omega} ((\tmop{div} (m) - a)
       I - 2 (\nabla m)^s) (\nabla u, \nabla u)^{} dx + \int_{\partial
       \Omega_N} (\partial_{\nu} uMu - (m\cdot \nu) | \nabla u|^2) d \sigma . \]
    On the other hand, another use of Green formula gives us:
    \[ \int_{\Omega} u' Mu' dx = \int_{\Omega} (a - \tmop{div} (m)) (u')^2 dx
       + \int_{\partial \Omega_N} (m\cdot \nu) |u' |^2 d \sigma . \]
    Consequently
    \[ \int_S^T \int_{\Omega} ( \text{div} (m) - a) (u')^2 + ((a - \text{div}
       (m)) I + 2 (\nabla m)^s) (\nabla u, \nabla u) dxdt \]
    \begin{eqnarray*}
      & \leq & \left. \left. - \left[ \int_{\Omega} u' Mudx \right]_S^T +
      \int_S^T \int_{\partial \Omega_N} \partial_{\nu} uMu + (m\cdot \nu) ((u')^2
      - | \nabla u|^2) d \sigma dt. \right. \right.
    \end{eqnarray*}
    Our goal is now to find $a$ such that $\text{div} (m) - a$ and $(a -
    \text{div} (m)) I + 2 (\nabla m)^s$ are uniformly minorized on $\Omega$.
    One has to find $a$ such that, uniformly on $\Omega$,
    \begin{equation}\label{5}
      \left\{ \begin{array}{l}
        \tmop{div} (m) - a \geq c\\
        2 \lambda_m + (a - \tmop{div} (m)) \geq c
      \end{array} \right.
    \end{equation}
    for some positive constant $c$. The latter condition is then equivalent to
    find $a$ which fulfills
    \[ \inf_{\bar{\Omega}} \tmop{div} (m) > a > \sup_{\bar{\Omega}} \left(
       \tmop{div} (m) - 2 \lambda_m \right), \]
    and its existence is now guaranteed by \eqref{m}. Moreover, it is straightforward to
    see that the greatest value of $c$ such that (\ref{5}) holds is
    \[ c (m) = \frac{1}{2} \left( \inf_{\bar{\Omega}} \tmop{div} (m) -
       \sup_{\bar{\Omega}} (\tmop{div} (m) - 2 \lambda_m) \right) \]
    and is obtained for $a = a_0$. This ends the proof.
  \end{proof}

  Consequently, the following result holds

  \begin{lemma}
    For every $\tau \leq S < T < \infty$, the following inequality holds true:
    \[ \int_S^T \int_{\Omega} ( u')^2 + | \nabla u|^2 dxdt
       \lesssim E (S) . \]
  \end{lemma}

  \begin{proof}
    We start from Lemma \ref{IPP}.

    \noindent First of all, Young~and Poincar\'e inequalities give
    \[ | \int_{\Omega} u' Mudx| \lesssim E (t), \]
    so that
    \[ \left. \left. - \left[ \int_{\Omega} u' Mudx \right]_S^T \lesssim E (S)
       + E (T) \leqslant CE (S) . \right. \right. \]
    Now, from the boundary condition, one has
    \[ Mu \partial_{\nu} u + (m\cdot \nu) ((u')_{}^2 - | \nabla u|^2) =
       (m\cdot \nu) \left( \left(\mu_0 u' + \int_0^t u' (t - s) d \mu (s) \right) Mu +
       (u')^2 - | \nabla u|^2 \right) . \]
    Using the definition of $Mu$ and Young inequality, we get for any
    $\epsilon > 0$
    \[ Mu \partial_{\nu} u + (m\cdot \nu) ((u')^2 - | \nabla u|^2)
       \leqslant (m\cdot \nu) \left( \left( 1 +\|m\|^2_{\infty} + \mu_0^2\frac{a_0^2}{2
       \epsilon} \right) (u')^2 + \frac{a_0^2}{2 \epsilon} \left(
       \int_0^t u' (t - s) d \mu (s) \right)^2 + \epsilon u^2 \right) . \]
    Another use of Poincar\'e inequality consequently allow us to choose
    $\epsilon > 0$ such that
    \[ \epsilon \int_{\partial \Omega_N} (m\cdot \nu) u^2 d \sigma \leqslant
       \frac{c (m)}{2} \int_{\Omega} | \nabla u|^2 dx. \]
    Cauchy-Schwarz inequality consequently leads to
    \[ \frac{c (m)}{2} \int_S^T \int_{\Omega} (u')^2 + | \nabla
       u|^2 dxdt \lesssim E (S) +  \int_S^T \int_{\partial \Omega_N}
       (m\cdot \nu) \left( u' (t)^2 + \int^t_0 (u' (t - s))^2 d |\mu| (s) \right) d
       \sigma dt \]
    and, since $ |\mu|\leq \lambda$, Lemma \ref{Energy} gives us the desired result:
    \[ c (m) \int_S^T \int_{\Omega} (u')^2 + | \nabla u|^2 dxdt \lesssim E (S)
       . \]
  \end{proof}

To conclude we need to absorb the two last integral terms for which
we use the following result.

\begin{lemma}

\begin{itemize}
\item For any solution $u$ and any $S < T$,
$$\int_S^T \int_{\partial \Omega_N} m\cdot \nu  \int_0^t \left( \int_0^s (
u' (t - r, x))^2 dr \right) d \lambda (s) d \sigma dt\lesssim
\int_S^T \int_{\partial \Omega_N} m\cdot \nu  \int_0^t (u' (t - s,
x))^2 d \lambda (s)d \sigma dt.$$
\item For any solution $u$ and any $S < T$,
\begin{eqnarray*}
\int_S^T \int_{\partial \Omega_N} m\cdot \nu \int_t^{+\infty} \left(
\int_0^s ( u' (s - r, x))^2 dr \right) d \lambda (s)  d \sigma
dt&\lesssim&
\int_S^T \int_{\partial \Omega_N} m\cdot \nu \int_0^t (u' (t - s, x))^2 d\lambda (s) d\sigma dt\\
&+&\int_S^{+\infty}\int_{\partial \Omega_N} m\cdot \nu \ u'^2 d
\sigma dt.
\end{eqnarray*}
\end{itemize}
\end{lemma}

\begin{proof}
\begin{itemize}
 \item
    We start from the left hand side term. We fix $x \in \partial \Omega_N, t
    \in [S, T]$ and we use Fubini theorem to estimate integrals with respect
    to time:
    \begin{eqnarray*}
     \int_0^t \left( \int_0^s (u' (t - r, x))^2 dr \right) d \lambda (s)
       &=& \int_0^t (u' (t - r, x))^2 \lambda ([r, t]) dr\\
&\leq& \int_0^t (u' (t - r, x))^2 \lambda ([r, +\infty)) dr\\
&\leq&  \alpha^{-1}\int_0^t (u' (t - r, x))^2d\lambda(r)
    \end{eqnarray*}
    which gives the required result after an integration with respect to $t$ and $x$.
\item As above, fixing $x \in \partial \Omega_N$, we obtain
\begin{eqnarray*}
     \int_S^T \int_t^{+\infty} \left( \int_0^s (u' (s - r,x)^2 dr \right) d \lambda (s)dt
       &=& \int_S^T \int_0^{+\infty} (u' (r, x))^2 \lambda ([\max(r, t),+\infty]) dr
       dt
\end{eqnarray*}
$$= \int_S^T\int_0^t (u' (t-r, x))^2 d r \lambda ([t, +\infty)) dt+\int_S^T
\left( \int_t^{+\infty} (u' (r, x))^2\lambda([r,+\infty))dr\right)
dt.$$

Since for all $r\leq t$, $\lambda ([t, +\infty) \leq \lambda ([r, +\infty))$, we first have
\begin{eqnarray*}
\int_S^T\int_0^t (u' (t-r, x))^2 d r \lambda ([t, +\infty)) dt&\leq& \int_S^T\int_0^t (u' (t-r, x))^2 \lambda ([r, +\infty))d rdt\\
&\leq& \alpha^{-1} \int_S^T\int_0^t (u' (t-r, x))^2 d\lambda(r)dt.
\end{eqnarray*}

On the other hand, Fubini theorem gives us
\[\int_S^T \left( \int_t^{+\infty} (u' (r, x))^2\lambda([r,+\infty))dr\right) dt=\int_S^{+\infty}u'(r)^2\lambda([r,+\infty)) (\min(T,r)-S)dr.\]
We now note that
$$ r\lambda([r,+\infty))\leq \int_r^{+\infty} sd\lambda(s) \leq \int_0^{+\infty} sd\lambda(s)$$
and
$$\int_0^{+\infty} sd\lambda(s)=\int_0^{+\infty}\lambda([t,+\infty))dt\leq \alpha^{-1} \lambda(\mathbb{R^+}).$$
We consequently obtain
\[\int_S^T \left( \int_t^{+\infty} (u' (r, x))^2\lambda([r,+\infty))dr\right) dt\lesssim \int_S^{+\infty}u'^2,\]
which give the required result after an integration over $\partial \Omega_N$.
\end{itemize}
  \end{proof}

Up to now, we have proven that
  \[ \int_S^T E (t) dt \lesssim E (S) + \int_S^T \int_{\partial \Omega_N} m\cdot \nu
     \int_0^t ( u' (t - s, x))^2 d \lambda (s) d \sigma dt+ \int_S^{+\infty} \int_{\partial \Omega_N} m\cdot \nu
     \ u'^2 d\sigma dt.\]
Lemma \ref{Energy} allows us to conclude since it gives
\[ \int_S^{+\infty} \int_{\partial \Omega_N} m\cdot \nu \
     u'^2 d\sigma dt\lesssim E(S)\]
and
\[\int_S^T \int_{\partial \Omega_N} m\cdot \nu
     \int_0^t ( u' (t - s, x))^2 d \lambda (s) d \sigma dt\lesssim E(S).\]
\end{proof}

\begin{remark}
In the case of some compactly supported measure $\mu$, one can also obtain exponential decay result for the following problem
\[ \left\{ \begin{array}{l}
     u'' - \Delta u = 0\\
     u = 0\\
     \partial_{\nu} u + \mu_0 u' (t) + \int_0^t u' (t - s) d \mu
     (s)= 0\\
     u (0) = u_0\\
     u' (0) = u_1
   \end{array} \right. \left. \begin{array}{l}
     \text{in } \mathbb{R}_+^{\ast} \times \mathbb{R}_+^{\ast} \hspace{0.25em},\\
     \text{on }  \mathbb{R}_+^{\ast}\times\partial \Omega_D
     \hspace{0.25em},\\
     \text{on } \mathbb{R}_+^{\ast} \times \partial \Omega_N
     \hspace{0.25em},\\
     \text{in } \Omega \hspace{0.25em},\\
     \text{in } \Omega \hspace{0.25em},
   \end{array} \right. \]
as it was done in \cite{NP2} using the work of Lasiecka-Triggiani-Yao \cite{LTY} and since the system is time invariant for $t\gg1$.
\par Moreover, a careful attention shows that our proof allows us to obtain decay for this system without assumption on the support of $\mu$ provided that
\begin{equation*}
\inf_{\partial \Omega_N} m\cdot\nu >0.
\end{equation*}
\end{remark}

\section{Examples}

We start with two general results and then particularize them to
recover results from the literature.

\begin{example}\label{ex1}
 If $\mu$ is some borelian measure such that
$$ |\mu|(\mathbb{R}^+)<\mu_0 \text{ and } \int_0^{+\infty} e^{\beta s} d |\mu|(s)< +\infty$$
for some $\beta>0$, then $\mu$ fulfils the   assumption (\ref{mu0})
for an appropriate
 $\alpha$. Indeed for any $0\leq \alpha\leq \beta$, the expression
 \[
 \int_0^{+\infty} e^{\alpha s} d |\mu|(s)
 \]
 is finite and by the dominated convergence Theorem of Lebesgue we
 have
 \[
 \int_0^{+\infty} e^{\alpha s} d |\mu|(s) \to |\mu|(\mathbb{R}^+)
 \hbox{ as } \alpha  \to 0.
 \]
Consequently by the assumption $|\mu|(\mathbb{R}^+)<\mu_0$, we get
(\ref{mu0}) for
 $\alpha$ small enough.
\end{example}

\begin{example}\label{ex2} One can choose
\[\mu=\sum_{i=1}^\infty \mu_i \delta_{\tau_i},\] where $(\tau_i)_{i=1}^\infty$, $(\mu_i)_{i=1}^\infty$ are
some families such that $\tau_i>0$ and are   two by two disjoint,
and
\begin{equation*}
\sum_{i=1}^\infty |\mu_i|e^{\alpha \tau_i}<\mu_0
\end{equation*}
for some $\alpha>0$.
\end{example}

\begin{example}\label{ex3}
 If we choose $d\mu(s)=k(s) ds$ where $k$ is  a kernel
satisfying
\begin{equation*}
\int_0^{+\infty} |k(s)| ds<\mu_0  \hbox{ and }
\int_0^{+\infty} |k(s)|e^{\beta s}ds<\infty
\end{equation*}
for some $\beta>0$. Then as a consequence of Example \ref{ex1}, we
get an exponential decay rate for the system (S) under the (very
weak) condition above, in particular we do not need any
differentiability assumptions on $k$,   nor uniform exponential
decay of $k$ at infinity as in \cite{ACS,CG,guesmia:99,NPaa}.
\end{example}

\begin{example}\label{ex4}
 Choose
 \[
d\mu(s)= k(s) \chi_{[\tau_1,\tau_2]}(s) ds, \] where
$k$ is an integrable function in $[\tau_1,\tau_2]$ such that
\[\int_{\tau_1}^{\tau_2}
|k(s)|ds<\mu_0,
\]
then we get an exponential decay for  the system (S) as a
consequence of Example \ref{ex1} because the second assumption
trivially holds. In that case we extend the results of  \cite{NP2}
to a larger class of kernels $k$,  for instance in the class of
bounded variations functions. \end{example}

\begin{example}\label{ex5}
Take
 \[
 \mu(s)= \mu_1 \delta_{\tau}(s), \]
 where $\mu_1$ is a  constant and $\tau>0$ represents
 the delay satisfying
\[|\mu_1|<\mu_0,
\]
then we recover the decay results from \cite{NP,NicValein}.
\end{example}

\end{document}